\newcommand{\crb}{\color{black}}
\newcommand{\crg}{\color{black}}
\newcommand{\crbb}{\color{black}}
\newcommand{\mz}{\color{black}}
\begin{document}

\title{Constrained Optimal Consensus in Dynamical Networks}



\author{Amir Adibzadeh \and
        Mohsen Zamani \and
        Amir A. Suratgar \and
        Mohammad B. Menhaj
}


\institute{Amir Adibzadeh \at
              Electrical Engineering Department, Amirkabir University of Technology, Tehran 15914, Iran\\
              \email{amir.adibzadeh@aut.ac.ir}           
           \and
           Mohsen Zamani \at
           School of Electrical Engineering and Computer Science, The University of Newcastle, Callaghan, NSW 2308, Australia\\
           \email{mohsen.zamani@newcastle.edu.au}
           \and
           Amir A. Suratgar \at
             Electrical Engineering 	Department, Amirkabir University of Technology, Tehran 15914, Iran\\
             \email{a-suratgar@aut.ac.ir}
           \and
           Mohammad B. Menhaj \at
           Electrical Engineering 	Department, Amirkabir University of Technology, Tehran 15914, Iran\\
           \email{menhaj@aut.ac.ir}
}

\date{Received: date / Accepted: date}

\maketitle

\begin{abstract}
In this paper, an optimal consensus problem with local inequality constraints
is studied for a network of single-integrator agents. The goal is that a group of single-integrator agents
rendezvous at the optimal point of the sum of local convex objective functions.
The local objective functions are only available
to the corresponding agents {\crbb that only need to know their relative distances from their neighbors
in order to seek the final optimal point.} This point is supposed
to be confined by some local inequality constraints. To tackle this problem, we integrate the primal-dual
gradient-based optimization algorithm with a consensus protocol to
drive the agents toward the agreed point that satisfies KKT conditions.
The asymptotic convergence of the solution of the optimization problem
is proven with the help of LaSalle's invariance principle for hybrid
systems. A numerical example is presented to show the effectiveness of
our protocol.
\keywords{Dynamical networks \and Distributed optimization \and Consensus}
\end{abstract}

\section{Introduction}
\label{intro}
Over the last decade, cooperative control in a network of autonomous
agents have been considered in scientific communities by virtue of big
breakthroughs in wireless communication technology. Among these problems, consensus in dynamical networks is
a central problem that has been studied from many aspects \cite{ren2007distributed,cheng2016reaching,fan2014semi,rezaee2015average}.
In particular, the problem of optimal consensus
among networked agents has recently gained considerable attention.
In this setup, the final consensus value is required to minimize the
sum of individual uncoupled convex functions.
For instance, the paper \cite{rahili2015distributed} resolved
the optimal consensus problem over a network of single-integrator agents with time-varying objective function
under the confining condition that Hessians associated with all local convex functions being identical.
Later, they suggested a more sophisticated algorithm to relax
this restriction.
The authors of \cite{xie2017global}
proposed a bounded control law applied to a network of single-integrator agents to resolve
the similar problem.
In these works, agents admit no constraint.
The optimal consensus problem can be formulated as distributed optimization
problem \cite{nedic2010constrained,lin2014constrained,lu2012zero}.
In this setup, all interconnected agents cooperate with each other to seek
the global optimal solution in a cooperative manner. Each agent minimizes its local
cost function and exchanges its states' information with its neighbors so that the team achieves the global optimum solution.
In \cite{nedic2010constrained}, a consensus
protocol is integrated with a projection operator to reach an agreed point that is limited to
the intersection of local constraint sets to solve a
distributed constrained optimization problem. The article \cite{lin2014constrained}
extended the work by \cite{nedic2010constrained} to study the problem
of constrained consensus in unbalanced networks.
The authors in \cite{lu2012zero} presented
a set of continuous-time algorithms called Zero-Gradient-Sum, by which
the states of a whole network asymptotically converge
 to the solution to the associated unconstrained convex optimization
problem along an invariant zero-gradient-sum manifold.\\
To resolve distributed optimization problems with inequality/equality constraints,
some researches were conducted based on primal-dual methods.
For example, 
the reference \cite{yi2015distributed} proposed a continuous-time dynamics
for seeking the saddle
point of the sum of agents' local Lagrangians to solve a distributed optimization problem
with both inequality and equality constraints per node.
The research paper  {\cite{yang2016multi} presented a continuous-time
protocol for distributed optimization problems with general constraints, relaxing
the assumption of global convexity on each local objective function
to convexity of locally bounded feasible region.
{\crbb In the both above-mentioned works, to attain consensus
on the globally optimal solution, a Lagrangian multiplier is assigned to each agent
for accommodating to consensus equality constraints.
Then, all agents exchange the information of their Lagrangian multipliers (dual states) as well as the primal decision variables 
with their neighbors in order to
reach consensus on the optimal solution. The papers \cite{wang2011control,gharesifard2014distributed,niederlander2016distributed} exploited the same technique to fulfill consensus over networks.}\\
{\crbb In particular, in this paper, a novel solution to optimal consensus
problems over undirected networks of
single-integrator agents is presented. Such solution must also satisfy
local convex inequalities for all agents. To tackle this problem, we split it into
two parts, namely, a consensus sub-problem and an optimization one.
Following this segmentation, we then propose a distributed
continuous-time solution that consists of two parts:
the first part yields the optimal points associated with local cost functions, and,
at the same time, the second part drives the agents toward reaching consensus.
In the proposed algorithm, each agent only needs to know its relative distances from its neighbors as well as
 its own objective function and constraint information.
 It is worthwhile noting that in applications such as swarm robots, 
 the relative positions with their neighbors  might be  the only 
 information that agents can have access to for constructing proper control actions.
 Our proposed algorithm makes the communication establishment,
 which is essential in the literature for exchanging the information 
 of Lagrangian multipliers for reaching optimal consensus value, redundant and unnecessary.
 Besides, by the present approach less communication burdens are imposed on the agents
 in communication-based control setups.}
 To establish the proposed algorithm, we present the stability analysis
associated with perturbed dynamical systems and introduce a novel convergence proof with the help of LaSalle's invariance principle for hybrid systems.\\
The results of the current paper provides further developments compared to the existing literature in this area.\\
i) {\crbb Compared to \cite{wang2011control,gharesifard2014distributed,yang2016multi}, in the present approach the agents do not need to exchange the information of their dual variables and can reach optimal consensus by only knowing their relative positions with respect to their neighbors.}\\
 ii) From design perspective, the penalty-based protocol studied in \cite{wang2011control,gharesifard2014distributed,kia2015distributed,yi2015distributed,yang2016multi,niederlander2016distributed} only admits linear consensus paradigm. This restricts the protocol illustrated in these references from adopting nonlinear consensus strategies that can in turn deliver fast convergence outcomes, see e.g. \cite{rahili2015distributed}. {\crb Besides, in the case of high order dynamics, this approach does not work, see e.g. \cite{rahili2015distributed,xie2017global}.} The algorithm introduced here does not have such limitation.\\
  iii) Even though the problem studied here is closely related to that of  in \cite{rahili2015distributed,xie2017global,lu2012zero,kia2015distributed}, unlike the current paper, these references only addressed unconstrained optimization. \\
  iv) While the references \cite{nedic2010constrained,lin2014constrained}  explored constrained optimization problems with convex set constraints, the projection operator utilized therein is difficult to handle in real-time specially when a large number of constraints are involved. Since  a closed convex set can be approximated by  a polyhedron set that is constituted by a set of linear equalities and inequalities, one can cast the optimization problem of \cite{nedic2010constrained,lin2014constrained} into the present formulation and adopt an easy-to-handle gradient-based primal-dual method discussed here to resolve it.\\
  v) {\crb The proposed algorithm achieves a less perplexed states's trajectories toward the final point compared to the existing penalty-based algorithms (see Section \ref{sec:SE}).}}\\
\indent {This paper is organized as follows. 
The problem formulation is given Section \ref{sec:1}.
 Then, our proposed solution is presented
 in Section \ref{sec:MR}. \textcolor{black}{ A numerical example is presented in Section \ref{sec:SE}.}
Finally, the concluding remarks and suggestions for future studies are given in Section \ref{sec:Conc}.}
\section{Problem Formulation}
\label{sec:1}
{\crbb Consider $N$ physical agents over a network with time-invariant
undirected graph $\mathcal{G}=(\mathcal{N},E,A)$,
where $\mathcal{N}=\{1,\ldots,N\}$ is the node set, $E\subseteq \mathcal{N}\times\mathcal{N}$
is the edge set , and $A=[a_{ij}]$ is a weighted adjacency matrix.
Each pair $e=(i,j)\in E$ indicates link between the
node $i$ and the node $j$ in an undirected graph.}
Suppose that each agent is described by
the continuous-time single-integrator dynamics

\begin{equation}
\ensuremath{{\dot{x}_{i}}(t)={u_{i}}(t)},\quad\label{eq:1}
\end{equation}
where $x_{i}(t)\in\mathbb{R}$ represents the position of agent $i$,
and $u_{i}(t)$ is the control input to agent $i$. {\mz  We shall drop the argument $t$ throughout this paper unless it is necessary}. { \mz It is worhwhile noting that here} we consider only one dimensional agents for the sake of simplicity in notations.
However, it is straightforward to show that our algorithm can be extended to higher dimensional dynamics as each dimension is decoupled from others and can be treated independently. The agents are supposed to reach at an agreed point that shall minimize  a convex
optimization problem as
\begin{equation}
\begin{split}\min \,f(x)=\sum_{i=1}^{N}f_{i}(x)\\
\text{subject to}\,g_{i}(x)\leq0,
\end{split}
\label{eq:2}
\end{equation}
in which $f_{i}(\cdot):\mathbb{R\rightarrow\mathbb{R}}$
is the local cost function associated with node $i$ in the network.
Furthermore, $g_{i}(\cdot):\mathbb{R\rightarrow\mathbb{R}}$ {represents a constraint on the
optimal position and is associated with node $i$.
{ It is supposed that each agent knows only its associated cost function and inequality constraint function.}
 {\crg We assumed only one inequality constraint per node for complexity avoidance; our algorithm can solve the same problem with a desired number of inequality constraints.}

We consider the following assumptions in relation to the problem} (\ref{eq:2}).\\
\noindent \textit{Assumption 1.}
\begin{enumerate}[(i)]

    \item The objective functions $f_{i}(\cdot)$,
    {\mz $i\in\mathcal{N}$} , and $g_{i}(\cdot)$, {\mz $i\in\mathcal{N}$,} are convex and continuously differentiable
    on $\mathbb{R}$.
    \item The aggregate cost function $\sum_{i=1}^{N}f_{i}(\cdot)$
  is radially unbounded on $\mathbb{R}$.
\end{enumerate}
\textit{Assumption 2.} (Slater's Condition) There exists $x^{*}\in\mathbb{R}$ such that $g_{i}(x^{*})\leq0$.

The Assumption 1 and 2 fulfill the solution existence conditions for the optimization problem \eqref{eq:2}.
{Note that the constrained optimal consensus problem that was defined above is equivalent with
the following distributed convex optimization problem}
\begin{equation}
\begin{array}{cc}
\underset{\underset{i=1,\cdots,N}{x_{i}}}{\text{min}}\sum_{i=1}^{N}f_{i}(x_{i})\\
\text{subject to}\,x_{i}=x_{j}\,\text{and}\,g_{i}(x_{i})\leq0,
\end{array}
\label{eq:3}
\end{equation}
In the problem (\ref{eq:3}),
the consensus constraint, i.e. $x_{i}=x_{j},$ $i,j=1,\ldots,N$,
is imposed to guarantee the same decision variable is achieved eventually. Here,
the agents with dynamics as in \eqref{eq:1} shall seek {\mz the optimal point $x^*$ i.e. } $x_{i}=x^{*}$, $i\in \mathcal{N}$,
which minimize the collective cost function $\sum_{i=1}^{N}f_{i}(x_i)$
in a distributed fashion, given inequality constraints
$g_{i}(x_i)\leq0$, $i\in\mathcal{N}$. To this end, each agent searches for the minimum of
its associated  cost function, $f_{i}(x_{i})$, with regards to its local inequality constraint, $g_{i}(x_{i})$, not knowing other local
cost functions and constraint inequalities. Furthermore, all agents shall reach an agreement on their positions through
{\crbb only knowing the relative distances from their neighboring agents.} Now, we shall design
the control input $u_{i}$ to fulfill these requirements.

{\crg One can say that the problem \eqref{eq:3} consists of a minimization
{\mz sub-problem}, with inequality constraints, and a consensus {\mz sub-problem}.
 This splitting is the cornerstone of our approach
to resolve the problem \eqref{eq:2}.} The
minimization {\mz sub-problem} can be defined as

\begin{equation}
\begin{split}
\underset{\underset{ i=1,\cdots,N}{x_{i}}}{\min }\sum_{i=1}^{N}f_{i}(x_{i})\\
\text{subject to}\,g_{i}(x_{i})\leq0,
\end{split}\label{eq:Problem3}
\end{equation}
and the consensus {\mz sub-problem} is
\begin{equation}
\underset{t\rightarrow\infty}{\text{lim}}\,(x_{i}-x_{j})=0,\,\,\,\,i,j=1,\ldots,N.\label{eq:consensusProb}
\end{equation}
{\crg Before proceeding to solving the above mentioned { sub-problem}s, i.e.
the minimization { sub-problem} \eqref{eq:Problem3} and the consensus sub-problem \eqref{eq:consensusProb},
we present some optimality conditions for the optimization { sub-problem} through the following lemma.
Later in Section \ref{sec:MR}, we will use these conditions to show the convergence of our algorithm.}
\begin{lemma}
\cite[p. 243]{boyd2004convex}\label{lem:(KKT-Conditions)} (KKT
Conditions) Given Assumption 1 and 2, $\overline{x}^{*}=\left[x_{1}\ldots x_{N}\right]^{\top}$
is the optimal solution of the problem (\ref{eq:Problem3}) if and
only if there exist Lagrangian multipliers, $\lambda_{i}^{*}>0$ ,
$i=1,\ldots,N$, such that the following conditions are satisfied
\begin{gather}
g_{i}(x_{i}^{*})\leq0,\,\,\,\,\,\,\lambda_{i}^{*}g_{i}(x_{i}^{*})=0,\,\,\,\,\,i=1,\ldots,N,\label{eq:KKT1}\\
\sum_{i}^{N}\frac{\partial f_{i}(x_{i}^{*})}{\partial x_{i}}+\lambda_{i}^{*}\frac{\partial g_{i}(x_{i}^{*})}{\partial x_{i}}=0,\,\,\,\,i=1,\ldots,N.\label{eq:KKT2}
\end{gather}
\end{lemma}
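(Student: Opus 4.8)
The plan is to treat Lemma~\ref{lem:(KKT-Conditions)} as the standard Karush--Kuhn--Tucker characterization of a convex program and to prove the two implications separately, exploiting the fact that under Assumption~1 the associated Lagrangian is convex in the primal variable. First I would introduce the Lagrangian of problem \eqref{eq:Problem3},
\[
L(\overline{x},\overline{\lambda})=\sum_{i=1}^{N}f_{i}(x_{i})+\sum_{i=1}^{N}\lambda_{i}g_{i}(x_{i}),\qquad \lambda_{i}\geq 0,
\]
and observe that, since each $f_{i}$ and each $g_{i}$ depends only on its own $x_{i}$, the program is fully \emph{separable}. This lets me reduce every global claim to $N$ scalar subproblems $\min_{x_{i}}f_{i}(x_{i})$ subject to $g_{i}(x_{i})\leq 0$, and in particular makes the Lagrangian stationarity condition decouple into the per-agent equations $\partial_{x_{i}}\bigl(f_{i}(x_{i}^{*})+\lambda_{i}^{*}g_{i}(x_{i}^{*})\bigr)=0$, which upon summation yield \eqref{eq:KKT2}.

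For the sufficiency direction I would assume a pair $(\overline{x}^{*},\overline{\lambda}^{*})$ satisfying \eqref{eq:KKT1}--\eqref{eq:KKT2} and deduce global optimality. Convexity of $f_{i}$ and $g_{i}$ together with $\lambda_{i}^{*}\geq 0$ makes $x_{i}\mapsto f_{i}(x_{i})+\lambda_{i}^{*}g_{i}(x_{i})$ convex, so stationarity forces $x_{i}^{*}$ to minimize it over $\mathbb{R}$. Then for any feasible $\overline{x}$ I would chain
\[
\sum_{i=1}^{N}f_{i}(x_{i}^{*})=L(\overline{x}^{*},\overline{\lambda}^{*})\leq L(\overline{x},\overline{\lambda}^{*})=\sum_{i=1}^{N}f_{i}(x_{i})+\sum_{i=1}^{N}\lambda_{i}^{*}g_{i}(x_{i})\leq\sum_{i=1}^{N}f_{i}(x_{i}),
\]
where the first equality is complementary slackness, the middle inequality is the minimizing property just established, and the last uses $\lambda_{i}^{*}\geq 0$ together with $g_{i}(x_{i})\leq 0$. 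This proves that $\overline{x}^{*}$ is a global minimizer.

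For the necessity direction the crux is the \emph{existence} of the multipliers, and this is exactly where Slater's condition (Assumption~2) enters. I would invoke strong duality for convex programs under Slater's condition to obtain a dual optimal $\overline{\lambda}^{*}\geq 0$ with zero duality gap; the resulting saddle-point property of $(\overline{x}^{*},\overline{\lambda}^{*})$ then forces primal feasibility, complementary slackness, and Lagrangian stationarity, that is, precisely \eqref{eq:KKT1}--\eqref{eq:KKT2}. Because of separability this existence argument can be run componentwise, so the burden reduces to producing a single multiplier for a scalar convex inequality-constrained problem on $\mathbb{R}$.

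I expect the main obstacle to lie in the necessity direction, specifically in guaranteeing that the multipliers exist and that there is no duality gap: sufficiency is an essentially mechanical consequence of convexity, whereas necessity rests entirely on the constraint qualification supplied by Slater's condition, without which stationarity of the Lagrangian need not be a necessary optimality condition. Since the statement is quoted verbatim from \cite[p.~243]{boyd2004convex}, I would either defer to their separating-hyperplane and strong-duality development or, given the one-dimensional separable structure here, supply a short self-contained multiplier argument per agent.
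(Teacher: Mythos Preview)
Your argument is correct and is precisely the standard textbook derivation of the KKT characterization for a convex program under Slater's condition. However, there is nothing to compare against: the paper does not prove Lemma~\ref{lem:(KKT-Conditions)} at all. It is simply quoted as a known result from \cite[p.~243]{boyd2004convex}, and the authors use it only as a black box later in the proof of Theorem~\ref{thm:Theorem1}. So your proposal is not an alternative route; it is a self-contained proof of something the paper merely cites.

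Two minor remarks on the statement itself, which you silently corrected. First, the paper writes $\lambda_{i}^{*}>0$, but of course the multipliers must be allowed to vanish (complementary slackness with a strictly feasible constraint forces $\lambda_{i}^{*}=0$); you rightly work with $\lambda_{i}^{*}\geq 0$. Second, the stationarity condition \eqref{eq:KKT2} as printed mixes a summation symbol with a per-$i$ quantifier, which is incoherent as written; your reading---that separability gives the per-agent stationarity $\partial_{x_{i}}f_{i}(x_{i}^{*})+\lambda_{i}^{*}\partial_{x_{i}}g_{i}(x_{i}^{*})=0$ for each $i$, and the displayed sum is just the aggregate of these---is the intended one and is consistent with how the authors use the condition downstream.
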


To solve the minimization { sub-problem} (\ref{eq:Problem3}), we focus
on the {\crg primal-dual} method that seeks the saddle point of the Lagrangian associated with convex
optimization {\mz sub-problem} (\ref{eq:Problem3}). {\crg The Lagrangian is defined by}

\begin{equation}
L(\bar{x},\bar{\lambda})=\sum_{i=1}^{N}f_{i}(x_{i})+\lambda_{i}g_{i}(x_{i}),\label{eq:LagrangianDef}
\end{equation}
where $\bar{\lambda}=\left[\lambda_{1}\ldots\lambda_{N}\right]^{\top}$and
$\bar{x}=\left[x_{1}\ldots x_{N}\right]^{\top}$. $L(\bar{x},\bar{\lambda})$
is convex in $\bar{x}$ and concave in
$\bar{\lambda}$. We have the following properties for $L(\bar{x},\bar{\lambda})$,

\begin{eqnarray}
L(\bar{x}^{*},\bar{\lambda}) & \geq & L(\bar{x},\bar{\lambda})+\nabla_{\bar{x}}L(\bar{x},\bar{\lambda})^{\top}.(\bar{x}^{*}-\bar{x}),\label{eq:LagProp1}\\
L(\bar{x},\bar{\lambda}^{*}) & \leq & L(\bar{x},\bar{\lambda})+\nabla_{\bar{\lambda}}L(\bar{x},\bar{\lambda})^{\top}.(\bar{\lambda}^{*}-\bar{\lambda}),\label{eq:LagProp2}
\end{eqnarray}
where $\left(\bar{x}^{*},\bar{\lambda}^{*}\right)$ is
said to be the saddle point of $L(\bar{x},\bar{\lambda})$ \cite[p. 238]{boyd2004convex}.
The following inequalities hold for all $(\bar{x},\bar{\lambda})\in dom(L)$

\begin{equation}
L(\bar{x}^{*},\bar{\lambda})\leq L(\bar{x}^{*},\bar{\lambda}^{*})\leq L(\bar{x},\bar{\lambda}^{*}).\label{eq:SaddlePointDef}
\end{equation}
%
From (\ref{eq:LagrangianDef}), one can define the Lagrangian function
for node $i$ as
\begin{equation}
L_{i}(x_{i},\lambda_{i})=f_{i}(x_{i})+\lambda_{i}g_{i}(x_{i}).\label{eq:LagrangianForNode}
\end{equation}
In the sequel, we will use $L$ and $L_{i}$ to denote the aggregate Lagrangian
(\ref{eq:LagrangianDef}) and the Lagrangian corresponding to
node $i$, i.e. (\ref{eq:LagrangianForNode}), respectively. {Hence, the main task of this paper is to find the saddle point of \eqref{eq:LagrangianDef} while consensus on the agents' states is also achieved.}

\section{Main Results}
\label{sec:MR}
We propose the following algorithm to find the
saddle point of \eqref{eq:LagrangianDef} and satisfy the consensus constraint \eqref{eq:consensusProb}
\begin{eqnarray}
\dot{x}_{i} & = & -\alpha\nabla_{x_{i}}L_{i}+h_{i},\,\,\,\,\,\,\,i=1,\ldots,N,\label{eq:Dynamic1}\\
\dot{\lambda}_{i} & = & \left[\nabla_{\lambda_{i}}L_{i}\right]_{\lambda_{i}}^{+},\label{eq:Dynamic2}
\end{eqnarray}
where $\alpha>0$ and $h_{i}=-\beta\sum_{j\in \mathcal{N}_{i}}(x_{i}-x_{j})$
with $\beta>0$. {\crg $\mathcal{N}_{i}=\{j|j\in\mathcal{N},(j,i)\in E\}$ is the set of neighbors corresponding to node $i$.} Note that $-\alpha\nabla_{x_{i}}L_{i}+h_{i}$
acts as the control input for agent $i$, i.e.
\begin{equation}
u_{i}=-\alpha\nabla_{x_{i}}L_{i}+h_{i}.\label{eq:ConsProt}
\end{equation}
In \eqref{eq:Dynamic2}, a positive projection is used to ensure that
Lagrangian multipliers remain non-negative. For scalars, $\left[p\right]_{q}^{+}=p$
if $p>0$ or $q>0$, and $\left[p\right]_{q}^{+}=0$ otherwise.
When $\left[p\right]_{q}^{+}=0$, the projection is said to be active.
{\crb Therefore, in \eqref{eq:Dynamic2} when $\lambda_i>0$ and $g_i(x_i)<0$,
$\dot{\lambda}_i<0$ and  $\lambda_i$ decreases until it reaches $0$ where the projection
becomes active and it remains $0$ until the sign of $g_i(x_i)$ turns. {\crb Note that we start with
$\lambda_i(0)>0$; therefore, $\lambda_i\geq0$ for all $t>0$.}}
One can define the set of active projection
by $\sigma=\{i:\lambda_{i}=0,\,g_{i}(x_{i})<0\}$.
Note that the control command \eqref{eq:ConsProt}, consists of two parts. The first part is to minimize the local cost function and the second part is associated with the consensus error.
{\mz
 The following lemma is instrumental to some of the results presented in this paper.}
\begin{lemma}
\cite{horn2012matrix}\label{lem:Lemma1} (Courant-Fischer Formula) The second smallest non-zero
eigenvalue of the matrix $M\in\mathbb{R}^{N\times{N}}$, that we denote by $v_2(M)$, {\mz  satisfies
 $v_{2}(M)=\displaystyle \min_{x^{\top}\mathbf{1}_{N}=0,x\neq\mathbf{0}_{N}}\frac{x^{\top}Mx}{x^{\top}x}$.}
\end{lemma}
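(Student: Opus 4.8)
The plan is to establish this identity as a specialization of the classical Courant--Fischer min--max theorem to the case in which the eigenvector associated with the smallest eigenvalue of $M$ is known explicitly to be $\mathbf{1}_N$. Since in our setting $M$ plays the role of a (symmetric) graph Laplacian, I would first invoke the spectral theorem to obtain an orthonormal basis of eigenvectors $q_1,\ldots,q_N$ of $M$, with associated real eigenvalues ordered as $\lambda_1 \le \lambda_2 \le \cdots \le \lambda_N$. For a connected undirected graph the smallest eigenvalue satisfies $\lambda_1 = 0$ with $q_1 = \mathbf{1}_N/\sqrt{N}$, so that $v_2(M) = \lambda_2$ is the first strictly positive eigenvalue.

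The core of the argument rests on the constraint $x^{\top}\mathbf{1}_N = 0$, which forces $x$ to be orthogonal to $q_1$ and hence to lie in the span of $q_2,\ldots,q_N$. Expanding $x = \sum_{i=2}^{N} c_i q_i$ and using orthonormality, I would compute
$$x^{\top} M x = \sum_{i=2}^{N} \lambda_i c_i^2, \qquad x^{\top} x = \sum_{i=2}^{N} c_i^2,$$
so that the Rayleigh quotient obeys
$$\frac{x^{\top} M x}{x^{\top} x} = \frac{\sum_{i=2}^{N} \lambda_i c_i^2}{\sum_{i=2}^{N} c_i^2} \ge \lambda_2,$$
because $\lambda_i \ge \lambda_2$ for every $i \ge 2$. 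To confirm that this lower bound is attained, I would take $x = q_2$, which is feasible (orthogonal to $\mathbf{1}_N$ and nonzero) and yields a Rayleigh quotient equal to $\lambda_2$. Combining the bound with its attainment shows that the constrained minimum equals $\lambda_2 = v_2(M)$.

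The main subtlety --- rather than a genuine obstacle --- is verifying that the smallest eigenvalue of $M$ indeed corresponds to the eigenvector $\mathbf{1}_N$, so that the orthogonality condition $x^{\top}\mathbf{1}_N = 0$ removes precisely the $\lambda_1$ direction. This is where symmetry of $M$ and the Laplacian structure are essential: symmetry guarantees the real orthonormal eigenbasis that legitimizes the expansion above, while connectivity guarantees that $\lambda_1 = 0$ is simple with eigenvector $\mathbf{1}_N$, ensuring that $v_2(M)$ is strictly positive and that the feasible set excludes exactly the null direction. Because this is the standard Courant--Fischer characterization (and is cited as such), no machinery beyond the spectral decomposition is required.
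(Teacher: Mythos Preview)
The paper does not supply a proof of this lemma at all; it is stated as a quotation from \cite{horn2012matrix} and used as a black box in Proposition~\ref{prop:consensustheorem}. Your argument is the standard Rayleigh-quotient derivation, and it is correct under the hypotheses you make explicit (symmetry of $M$ and $\mathbf{1}_N$ being the eigenvector for the smallest eigenvalue), which is precisely the Laplacian setting in which the paper later applies the result.
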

{\mz
Before proving that the algorithm in  (\ref{eq:Dynamic1}) and (\ref{eq:Dynamic2})
yields the saddle point of (\ref{eq:LagrangianDef}), we show that
the positions of agents, $x_{i}$, $i\in\mathcal{N}$, reach consensus,
when taking control input as $u_{i}=-\alpha\nabla_{x_{i}}L_{i}+h_{i}$,
$i\in\mathcal{N}$. This is established in the next proposition.
}
{
\begin{proposition}
\label{prop:consensustheorem}{ \crb Suppose that the graph $\mathcal{{G}}$
is connected and undirected. Then, there exists some finite $t_{1}$
such that the agents \eqref{eq:1}{ satisfy
$\left|x_{i}(t)-x_{j}(t)\right|\leq\delta_0$,
$i,j=1,\ldots,N$, with $\delta_0$ small as desired, for $t>t_{1}$ 
\eqref{eq:ConsProt}, if $\left| \nabla_{x_{i}}L_{i}-\nabla_{x_{j}}L_{j}\right| <\omega_{0}$,
$i,j=1,\ldots,N$, with $\omega_{0} \in{\mathbb{R}}^{+}$.} }\end{proposition}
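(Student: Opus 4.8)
The plan is to recast the position dynamics in vector form and treat the gradient terms as a bounded disturbance acting on a nominal linear consensus system, then run a Lyapunov and comparison argument on the disagreement vector. Writing $x=[x_1\ldots x_N]^\top$ and $g=[\nabla_{x_1}L_1\ldots\nabla_{x_N}L_N]^\top$, and letting $L_{\mathcal{G}}$ denote the (weighted) Laplacian of $\mathcal{G}$, the closed loop \eqref{eq:Dynamic1} with $h_i=-\beta(L_{\mathcal{G}}x)_i$ reads $\dot{x}=-\alpha g-\beta L_{\mathcal{G}}x$. First I would introduce the average $\bar{x}=\tfrac{1}{N}\mathbf{1}_N^\top x$ and the disagreement vector $e=x-\bar{x}\mathbf{1}_N=Px$, where $P=I-\tfrac{1}{N}\mathbf{1}_N\mathbf{1}_N^\top$ is the orthogonal projector onto $\{z:\mathbf{1}_N^\top z=0\}$. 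Using $L_{\mathcal{G}}\mathbf{1}_N=0$ and the symmetry of $L_{\mathcal{G}}$ one gets $L_{\mathcal{G}}x=L_{\mathcal{G}}e$ and $L_{\mathcal{G}}e\perp\mathbf{1}_N$, hence $\dot{e}=-\beta L_{\mathcal{G}}e-\alpha Pg$.

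Next I would take the Lyapunov candidate $V=\tfrac{1}{2}e^\top e=\tfrac{1}{2}\|e\|^2$ and compute $\dot{V}=-\beta e^\top L_{\mathcal{G}}e-\alpha e^\top Pg$. Because $e\perp\mathbf{1}_N$ and $\mathcal{G}$ is connected and undirected, Lemma \ref{lem:Lemma1} (Courant-Fischer) gives $e^\top L_{\mathcal{G}}e\ge v_2(L_{\mathcal{G}})\|e\|^2$ with $v_2(L_{\mathcal{G}})>0$. For the disturbance term I would exploit the hypothesis $|\nabla_{x_i}L_i-\nabla_{x_j}L_j|<\omega_0$: the $i$-th entry of $Pg$ equals $g_i-\tfrac{1}{N}\sum_j g_j=\tfrac{1}{N}\sum_j(g_i-g_j)$, so each entry is bounded in magnitude by $\omega_0$ and therefore $\|Pg\|<\sqrt{N}\,\omega_0$. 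Combining this with the Cauchy-Schwarz inequality yields $\dot{V}\le-\beta v_2(L_{\mathcal{G}})\|e\|^2+\alpha\sqrt{N}\,\omega_0\,\|e\|$.

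Converting this into a bound on $\|e\|$ directly (using $\tfrac{d}{dt}\|e\|=\dot{V}/\|e\|$ for $\|e\|\neq0$) produces the scalar differential inequality $\tfrac{d}{dt}\|e\|\le-\beta v_2(L_{\mathcal{G}})\|e\|+\alpha\sqrt{N}\,\omega_0$. The comparison lemma then gives $\|e(t)\|\le\|e(0)\|e^{-\beta v_2(L_{\mathcal{G}})t}+\tfrac{\alpha\sqrt{N}\,\omega_0}{\beta v_2(L_{\mathcal{G}})}\bigl(1-e^{-\beta v_2(L_{\mathcal{G}})t}\bigr)$, so $\|e(t)\|$ is ultimately bounded by $\tfrac{\alpha\sqrt{N}\,\omega_0}{\beta v_2(L_{\mathcal{G}})}$. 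Since $|x_i-x_j|=|e_i-e_j|\le2\|e\|$, for any target $\delta_0$ exceeding twice this ultimate bound the exponentially decaying transient can be absorbed by choosing $t_1$ large enough, which furnishes the claimed finite $t_1$; moreover the ultimate bound, and hence the achievable $\delta_0$, can be made as small as desired by increasing the consensus gain $\beta$ (equivalently by shrinking $\omega_0$).

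I expect the main obstacle to be the disturbance term $-\alpha Pg$: the conclusion is not exact consensus but \emph{practical} consensus, so the argument must deliver an ultimate bound rather than convergence to zero. The crux is translating the pairwise gradient-difference hypothesis into the $\ell_2$ estimate $\|Pg\|<\sqrt{N}\,\omega_0$ and then pairing it correctly with the spectral gap $v_2(L_{\mathcal{G}})$, so that the resulting input-to-state estimate is sharp enough to justify the ``small as desired'' quantifier on $\delta_0$.
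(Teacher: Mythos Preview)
Your proposal is correct and follows essentially the same route as the paper's own proof: project onto the disagreement subspace via $I-\tfrac{1}{N}\mathbf{1}\mathbf{1}^\top$, take $V=\tfrac{1}{2}\|e\|^2$, invoke Courant--Fischer for the spectral gap, treat the gradient term as a bounded disturbance, and conclude practical (ultimately bounded) consensus tunable by $\beta$. The only cosmetic differences are that the paper writes the Laplacian as $DD^\top$ via the incidence matrix and finishes by citing Theorem~5.1 of Khalil rather than your explicit comparison-lemma bound; your $\sqrt{N}$ factor in $\|Pg\|$ is in fact more careful than the paper's disturbance estimate.
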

\begin{proof}
{\crb
The aggregate dynamics of agents \eqref{eq:1} with \eqref{eq:ConsProt}
is $\dot{\bar{x}}=-\beta DD^{\top}\bar{x}+\Omega,\label{eq:ConsProp1}$
where $\varOmega=[-\alpha\nabla_{x_{1}}L_{1}\ldots-\alpha\nabla_{x_{N}}L_{N}]^{\top}$.
{\crb Let the network's consensus error be defined by $\bar{e}_{x}=\Pi\bar{x}$,
where $\Pi=I_{N}-\frac{1}{N}\mathbf{1}_{N}\mathbf{1}_{N}^{\top}$
and $\bar{x}$ denotes the aggregate state of the network, that is
defined by $\bar{x}=\left[x_{1}\ldots x_{N}\right]^{\top}$.
Note that $\mathbf{1}^{\top}\Pi=\mathbf{0}$ and $\Pi\mathbf{1=0}$.}
Thus, { one can write}
\begin{equation}
\dot{\bar{e}}_{x}=-\beta DD^{\top}\bar{e}_{x}+\Pi\Omega,\label{eq:ConsProp2}
\end{equation}
{ where $D=[d_{ik}]\in \mathbb{R}^{N\times|E|}$ is the  incidence
matrix associated with the topology $\mathcal{G}$. And, its entries i.e. $d_{ik}$, are  obtained by  assigning an arbitrary orientation for the
edges in $\mathcal{G}$. For instance, if one  considers the $k^{th}$ edge i.e.  $e_{k}=(i,j)$,
then  $d_{ik}=-1$ if the edge $e_{k}$ leaves node $i$, $d_{ik}=1$
if it enters node $i$, and $d_{ik}=0$ otherwise.}
We choose the Lyapunov candidate function $V(\bar{e}_{x})=\frac{1}{2}\bar{e}_{x}^{\top}\bar{e}_{x}$.
By taking time derivative from $V(\bar{e}_{x})$ along the trajectories of $\bar{e}_x$, one can write
\begin{eqnarray}
  \dot{V}(\bar{e}_{x}) &=& -\beta \bar{e}_{x}^{T} DD^{\top} \bar{e}_{x}+\bar{e}_x^\top\Pi\Omega \nonumber \\
   &\leq& -\beta {v}_{2}(DD^{\top})\left\Vert \bar{e}_{x}\right\Vert ^{2}+ \bar{e}_x^\top\Pi\Omega \nonumber \\
   &\leq& -\beta {v}_{2}(DD^{\top})\left\Vert \bar{e}_{x}\right\Vert ^{2}+ \alpha \left\Vert \bar{e}_{x}\right\Vert \omega_0\label{eq:ConsProp4}
\end{eqnarray}
{\crb where ${v}_{2}({DD^{\top}})$ denotes the smallest non-zero eigenvalue of $DD^{\top}$.}
In the above, the first inequality is resulted from Lemma \ref{lem:Lemma1},
 and the second inequality is resulted from the assumption $\left| \nabla_{x_{i}}L_{i}-\nabla_{x_{j}}L_{j}\right| <\omega_{0}$ given in the statement of the proposition.
 From \eqref{eq:ConsProp4}, one can say that{
 \begin{equation}
  \dot{V}(\bar{e}_{x})\leq-\theta\left\Vert \bar{e}_{x}\right\Vert ^{2}+\left(\left(\theta-\beta {v}_{2}(DD^\top)\right)\left\Vert \bar{e}_{x}\right\Vert+ \alpha\omega_0\right)\left\Vert \bar{e}_{x}\right\Vert,
\end{equation}}
where $0<\theta<1$. {  For $\left\Vert \bar{e}_{x}\right\Vert \geq \displaystyle \frac{\omega_{0}\alpha}{\beta {v}_{2}(DD^\top)\theta}$, we obtain $\dot{V}(\bar{e}_x)<0$.}
Now, we are ready to invoke Theorem 5.1 from \cite{khalil1996nonlinear} that guarantees that by choosing $\beta$ large
enough, one can make the consensus error, $\delta_0$, { as} small as desired.}
\end{proof}
\begin{remark}
Assumption {$\left| \nabla_{x_{i}}L_{i}-\nabla_{x_{j}}L_{j}\right| <\omega_{0}$
in Proposition \ref{prop:consensustheorem} seems to be unreasonable {\mz at the  first glance}  as  it
assumes that the primal and dual variables $x_{i}$
and $\lambda_{i}$, $i=1,\ldots,N$, must remain bounded. However, we will show by the following lemma
that this requirement always holds. { It is worthwhile  mentioning that by choosing a conservative bound on $\omega_{0}$
one can adjust the protocol's parameters to reach consensus with {\mz any} desired accuracy.}}
\end{remark}
{\mz
We now assert that the trajectories generated by the dynamics (\ref{eq:Dynamic1})
and (\ref{eq:Dynamic2}) are globally bounded.}
\begin{lemma}
\label{lem:Boundedness}Given that the graph $\mathcal{G}$ is connected and
undirected, the solutions of (\ref{eq:Dynamic1}) and (\ref{eq:Dynamic2})
are globally bounded. \end{lemma}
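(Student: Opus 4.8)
The plan is to exhibit a single radially unbounded Lyapunov function for the joint $(\bar{x},\bar{\lambda})$ dynamics whose derivative is nonpositive; global boundedness then follows because the sublevel sets of such a function are compact. First I would rewrite \eqref{eq:Dynamic1}--\eqref{eq:Dynamic2} in aggregate form, $\dot{\bar{x}}=-\alpha\nabla_{\bar{x}}L-\beta DD^{\top}\bar{x}$ and $\dot{\bar{\lambda}}=[\nabla_{\bar{\lambda}}L]^{+}_{\bar{\lambda}}$, and observe that these are exactly the projected primal--dual (descent--ascent) dynamics of the augmented Lagrangian $\phi(\bar{x},\bar{\lambda})=\alpha L(\bar{x},\bar{\lambda})+\tfrac{\beta}{2}\bar{x}^{\top}DD^{\top}\bar{x}$. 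Indeed $\dot{\bar{x}}=-\nabla_{\bar{x}}\phi$ and, since $\nabla_{\bar{\lambda}}\phi=\alpha\,\nabla_{\bar{\lambda}}L$ and the projection commutes with positive scaling, $\dot{\bar{\lambda}}=\tfrac{1}{\alpha}[\nabla_{\bar{\lambda}}\phi]^{+}_{\bar{\lambda}}$. The point of folding the consensus term $-\beta DD^{\top}\bar{x}$ into $\phi$ is that $\phi$ remains convex in $\bar{x}$ (the quadratic $\bar{x}^{\top}DD^{\top}\bar{x}$ is convex since $DD^{\top}$ is positive semidefinite) and concave---indeed affine---in $\bar{\lambda}$, so the whole saddle-point machinery of \eqref{eq:LagProp1}--\eqref{eq:SaddlePointDef} applies verbatim to $\phi$.

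Next I would let $(\bar{x}^{\circ},\bar{\lambda}^{\circ})$ denote an equilibrium of \eqref{eq:Dynamic1}--\eqref{eq:Dynamic2}; by the two displayed identities this is precisely a KKT/saddle point of $\phi$ over $\bar{\lambda}\geq 0$, whose existence is guaranteed by Assumption 1 and Assumption 2 exactly as for \eqref{eq:Problem3}. I would then take the quadratic, radially unbounded candidate $V=\tfrac{1}{2}\|\bar{x}-\bar{x}^{\circ}\|^{2}+\tfrac{\alpha}{2}\|\bar{\lambda}-\bar{\lambda}^{\circ}\|^{2}$ and differentiate along trajectories, obtaining $\dot{V}=-(\bar{x}-\bar{x}^{\circ})^{\top}\nabla_{\bar{x}}\phi+(\bar{\lambda}-\bar{\lambda}^{\circ})^{\top}[\nabla_{\bar{\lambda}}\phi]^{+}_{\bar{\lambda}}$. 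The convexity of $\phi$ in $\bar{x}$ bounds the first term by $\phi(\bar{x}^{\circ},\bar{\lambda})-\phi(\bar{x},\bar{\lambda})$ (the analogue of \eqref{eq:LagProp1}); the projection property $(\bar{\lambda}-\bar{\lambda}^{\circ})^{\top}[\nabla_{\bar{\lambda}}\phi]^{+}_{\bar{\lambda}}\leq(\bar{\lambda}-\bar{\lambda}^{\circ})^{\top}\nabla_{\bar{\lambda}}\phi$ for $\bar{\lambda}^{\circ}\geq 0$, together with affineness of $\phi$ in $\bar{\lambda}$, bounds the second term by $\phi(\bar{x},\bar{\lambda})-\phi(\bar{x},\bar{\lambda}^{\circ})$ (the analogue of \eqref{eq:LagProp2}). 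Adding them collapses the cross terms and yields $\dot{V}\leq\phi(\bar{x}^{\circ},\bar{\lambda})-\phi(\bar{x},\bar{\lambda}^{\circ})\leq 0$, where the last inequality is the saddle-point relation \eqref{eq:SaddlePointDef} written for $\phi$.

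Finally, since $V$ is a positive definite quadratic in $(\bar{x}-\bar{x}^{\circ},\bar{\lambda}-\bar{\lambda}^{\circ})$ it is radially unbounded, so $\dot{V}\leq 0$ forces every trajectory to remain inside the compact sublevel set $\{V\leq V(\bar{x}(0),\bar{\lambda}(0))\}$; this is exactly the asserted global boundedness, and it also reconfirms $\lambda_{i}\geq 0$ for all $t$. The main obstacle I anticipate is not the descent computation but the nonsmoothness introduced by the projection $[\cdot]^{+}_{\bar{\lambda}}$: the right-hand side of \eqref{eq:Dynamic2} is discontinuous, so $\dot{V}$ must be interpreted in the Filippov/hybrid sense (consistent with the hybrid LaSalle framework announced in the abstract), and the key projection inequality has to be verified case-by-case. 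On the active set $\sigma=\{i:\lambda_{i}=0,\,g_{i}(x_{i})<0\}$ one checks $(\lambda_{i}-\lambda_{i}^{\circ})\cdot 0\leq(\lambda_{i}-\lambda_{i}^{\circ})g_{i}(x_{i})$ using $\lambda_{i}^{\circ}\geq 0$ and $g_{i}(x_{i})<0$, while off the active set the inequality is an equality. A secondary point worth dispatching carefully is the existence of the saddle point $(\bar{x}^{\circ},\bar{\lambda}^{\circ})$, which rests on the coercivity supplied by Assumption 1(ii) and Slater's condition in Assumption 2.
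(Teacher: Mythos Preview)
Your argument is correct and is essentially the same Lyapunov strategy the paper uses: a quadratic distance-to-saddle-point function, the projection inequality verified on and off the active set $\sigma$, and the convexity/concavity bounds \eqref{eq:LagProp1}--\eqref{eq:LagProp2} to collapse $\dot V$ into a saddle-point difference that is nonpositive by \eqref{eq:SaddlePointDef}. The paper's $W=\tfrac{1}{2\alpha}\|\bar{x}-\bar{x}^{*}\|^{2}+\tfrac{1}{2}\|\bar{\lambda}-\bar{\lambda}^{*}\|^{2}$ is just $\tfrac{1}{\alpha}$ times your $V$, and the case analysis for $i\in\sigma$ versus $i\notin\sigma$ is identical.

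The one genuine difference is how the consensus term $H=-\beta DD^{\top}\bar{x}$ is disposed of. The paper keeps $L$ as the bare (decoupled) Lagrangian \eqref{eq:LagrangianDef}, so after applying \eqref{eq:LagProp1}--\eqref{eq:SaddlePointDef} a residual term $\tfrac{1}{\alpha}(\bar{x}-\bar{x}^{*})^{\top}H$ survives, and it is then bounded separately using the undirected-graph identity $\sum_{i}\sum_{j\in\mathcal N_{i}}(x_{i}-x_{j})=0$ and the Laplacian quadratic form $-\sum_{i}x_{i}\sum_{j\in\mathcal N_{i}}(x_{i}-x_{j})\leq 0$. You instead fold $H$ into an augmented Lagrangian $\phi=\alpha L+\tfrac{\beta}{2}\bar{x}^{\top}DD^{\top}\bar{x}$, note that $DD^{\top}\succeq 0$ preserves convexity in $\bar{x}$, and center the Lyapunov function at a saddle point of $\phi$ rather than of $L$; the consensus term is then absorbed automatically by the convexity inequality for $\phi$ and never needs a separate graph-theoretic estimate. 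Your route is a bit cleaner and sidesteps the need for the reference point to be a consensus vector (which the paper implicitly uses when it factors $x^{*}$ through $\sum_{i}\sum_{j\in\mathcal N_{i}}(x_{i}-x_{j})=0$); the paper's route, on the other hand, makes the role of the network topology explicit and reuses $(\bar{x}^{*},\bar{\lambda}^{*})$ from \eqref{eq:Problem3} rather than introducing a new saddle point.
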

\begin{proof}
We study boundedness of the solutions of (\ref{eq:Dynamic1}) and (\ref{eq:Dynamic2})
by Lyapunov stability analysis. Let us define a quadratic Lyapunov function as
\begin{equation}
W(\bar{x},\bar{\lambda})=\frac{1}{2\alpha}(\bar{x}-\bar{x}^{*})^{\top}(\bar{x}-\bar{x}^{*})+\frac{1}{2}(\bar{\lambda}-\bar{\lambda}^{*})^{\top}(\bar{\lambda}-\bar{\lambda}^{*}).\label{eq:LyapFunBounded}
\end{equation}
\noindent In the above equation, $\left(\bar{x}^{*},\bar{\lambda}^{*}\right)$
represents a saddle point equilibrium associated with $L(\bar{x},\bar{\lambda})$.
By taking derivative from
both sides of (\ref{eq:LyapFunBounded}) along the trajectories (\ref{eq:Dynamic1})
and (\ref{eq:Dynamic2}), with respect to time, we will have
\begin{eqnarray}
\dot{W}(\bar{x},\bar{\lambda}) & = & 
-(\bar{x}-\bar{x}^{*})^{\top}\mathbf{\mathbf{\nabla}}_{\bar{x}}L+\frac{1}{\alpha}(\bar{x}-\bar{x}^{*})^{\top}H\nonumber \\
 &  & +\sum_{i=1}^{N}(\lambda_{i}-\lambda_{i}^{*})\left[\nabla_{\lambda_{i}}L_{i}\right]_{\lambda_{i}}^{+},\label{eq:Bounded1}
\end{eqnarray}
 where $\mathbf{\mathbf{\nabla}}_{\bar{x}}L=\left[\nabla_{x_{1}}L_{1}\ldots\nabla_{x_{N}}L_{N}\right]^{\top}$
and $H=\left[h_{1}\ldots h_{N}\right]^{\top}$.

{\crb Suppose that  for some index $i$, the projection becomes active   i.e. $i\in\sigma$. In this case $\lambda_i=0$ and  $\nabla_{\lambda_{i}}L_{i}=g_i(x_i)<0$. It is worthwhile noting that $\lambda_i<0$ never holds when parameters are initialized by positive values.
Thus, in this case one can conclude  that $(\lambda_{i}-\lambda_{i}^{*})\nabla_{\lambda_{i}}L_{i}\geq0$
due to the fact that  $\nabla_{\lambda_{i}}L_{i}<0$ and  $\lambda_{i}^{*}\geq0$.  On the other hand, for the agents the projection is not active, $(\lambda_{i}-\lambda_{i}^{*})\left[\nabla_{\lambda_{i}}L_{i}\right]_{\lambda_{i}}^{+} =$ $(\lambda_{i}-\lambda_{i}^{*})\nabla_{\lambda_{i}}L_{i}$ holds.} Thus, we can  assert that the following inequality holds.
\begin{equation}
\dot{W}(\bar{x},\bar{\lambda})\leq-(\bar{x}-\bar{x}^{*})^{\top}\mathbf{\mathbf{\nabla}}_{\bar{x}}L+\frac{1}{\alpha}(\bar{x}-\bar{x}^{*})^{\top}H+(\bar{\lambda}-\bar{\lambda}^{*})^{\top}\nabla_{\bar{\lambda}}L.\label{eq:Bounded1_1}
\end{equation}
Then, from (\ref{eq:LagProp1}) and (\ref{eq:LagProp2}), we have
\begin{align}
\dot{W}(\bar{x},\bar{\lambda}) & \leq - L(\bar{x}^{*},\bar{\lambda})+L(\bar{x},\bar{\lambda})+\frac{1}{\alpha}(\bar{x}-\bar{x}^{*})^{\top}H\nonumber \\
 &   -L(\bar{x},\bar{\lambda})+L(\bar{x},\bar{\lambda}^{*})\nonumber \\
 & \leq  \frac{1}{\alpha}(\bar{x}-\bar{x}^{*})^{\top}H\label{eq:bounded2}\\
 & = -\frac{\beta}{\alpha}\sum_{i=1}^{N}(x_{i}-x^{*})\sum_{j\in{N}_{i}}(x_{i}-x_{j})\nonumber \\
 & = -\frac{\beta}{\alpha}\sum_{i=1}^{N}x_{i}\sum_{j\in\mathcal{N}_{i}}(x_{i}-x_{j}).
\end{align}
The inequality \eqref{eq:bounded2} is due to \eqref{eq:SaddlePointDef}. Furthermore, the last equality results from the fact that $\sum_{i=1}^{N}\sum_{j\in\mathcal{N}_{i}}(x_{i}-x_{j})=0$
in a network with the undirected graph $\mathcal{{G}}$.
It is easy to show that $-\sum_{i=1}^{N}x_{i}\sum_{j\in \mathcal{N}_{i}}(x_{i}-x_{j})\leq0$
in an undirected graph. Hence, $\dot{W}(\bar{x},\bar{\lambda})\leq0$, and, thus, the proof is concluded.
\end{proof}
The dynamics \eqref{eq:Dynamic1} and \eqref{eq:Dynamic2}
can be regarded as a hybrid system {\crg due to switching projection
operator on the right side of the relation \eqref{eq:Dynamic2}}.
Thus, before proceeding to the main result of this section, we introduce the LaSalle's
invariance principle for hybrid systems through a lemma first given
in \cite{lygeros2003dynamical} and later summarized in \cite{feijer2010stability}.
\begin{lemma}
\cite{feijer2010stability}\label{lem:InvarPrinciple}
Consider the hybrid dynamics \eqref{eq:Dynamic1} and \eqref{eq:Dynamic2} with
a compact invariant set $\mathcal{O}$ and there exists a continuously differentiable
positive function ${V}(\dot{\bar{x}},\dot{\bar{\lambda}};\sigma)$ that decreases
along trajectories in $\mathcal{O}$. Then, every trajectory
generated by the hybrid dynamics
and initiated in $\mathcal{O}$ converges to $M$, the maximal invariant
set within $\mathcal{O}$, which satisfies {\mz
\begin{itemize}
	\item [a)] $\dot{V}(\dot{\bar{x}},\dot{\bar{\lambda}};\sigma)=0$ in intervals of
	fixed $\sigma$,
	\item [b)]  $V(\dot{\bar{x}},\dot{\bar{\lambda}};\sigma^{-})=V(\dot{\bar{x}},\dot{\bar{\lambda}};\sigma^{+})$
	if $\sigma$ switches between $\sigma^{-}$and $\sigma^{+}$.
\end{itemize}
}
\end{lemma}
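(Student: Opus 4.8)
The plan is to transplant the classical LaSalle invariance argument into the hybrid setting, using compactness of $\mathcal{O}$ together with the piecewise-constant nature of the switching signal $\sigma$. Since $V$ is positive and decreases along trajectories confined to $\mathcal{O}$, it is bounded below by $0$ and nonincreasing in time; hence $V$ converges to some limit $c\geq 0$ as $t\to\infty$. This monotone-convergence observation is the engine that drives the whole argument, exactly as in the smooth case, and it is the first thing I would establish.

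Next I would invoke compactness and forward invariance of $\mathcal{O}$ to guarantee that every execution initiated in $\mathcal{O}$ remains there for all time and possesses a nonempty positive limit set $L^{+}\subseteq\mathcal{O}$ to which the trajectory converges. The crucial intermediate step is to show that $L^{+}$ is invariant under the hybrid flow and that $V\equiv c$ on $L^{+}$; the latter follows from continuity of $V$ together with $V(t)\to c$, since every point of $L^{+}$ is a limit of trajectory points along which $V$ tends to $c$.

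With $V$ constant on the invariant set $L^{+}$, the two conditions characterizing $M$ fall out almost immediately. On any time interval over which $\sigma$ is held fixed the dynamics reduce to an ordinary differential equation, and $V$ being constant forces $\dot{V}=0$, which is condition (a). Across a switching instant, constancy of $V$ on $L^{+}$ gives $V(\cdot\,;\sigma^{-})=c=V(\cdot\,;\sigma^{+})$, precisely condition (b). Thus $L^{+}$ is an invariant subset of $\mathcal{O}$ satisfying (a) and (b), and by maximality $L^{+}\subseteq M$; since the trajectory converges to $L^{+}$, it converges to $M$.

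The main obstacle is not the Lyapunov bookkeeping but the hybrid solution framework underpinning it. One must fix precise notions of execution, invariant set, and positive limit set for the switched dynamics, and rule out pathologies---in particular Zeno behavior and accumulation of switching times---so that the limit-set machinery remains valid in the first place. This is where the well-posedness results of the hybrid-systems formalism in \cite{lygeros2003dynamical} are needed: they certify existence, continuation, and the topological regularity of $L^{+}$ required to mimic the continuous-time proof. Establishing that $V$ does not jump across transitions---so that the monotone limit $c$ is genuinely shared across modes---is the delicate point, and it is exactly the role that condition (b) plays in the characterization of $M$.
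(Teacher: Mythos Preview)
The paper does not prove this lemma at all: it is stated as a known result, imported verbatim from \cite{feijer2010stability} (with the underlying framework attributed to \cite{lygeros2003dynamical}), and then used as a black box in the proof of Theorem~\ref{thm:Theorem1}. There is therefore nothing to compare your argument against in the paper itself.

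That said, your sketch is a faithful outline of how the result is actually established in those references: monotone convergence of $V$ along bounded trajectories, compactness giving a nonempty $\omega$-limit set $L^{+}$, invariance of $L^{+}$ under the hybrid flow, constancy of $V$ on $L^{+}$, and then reading off conditions (a) and (b) from that constancy. You are also right that the substantive work lies in the hybrid-systems plumbing---well-posedness of executions, non-Zeno behavior, and the correct notion of invariance for $L^{+}$---rather than in the Lyapunov calculus, and that this is precisely what \cite{lygeros2003dynamical} supplies. One small point: in your last paragraph you say that ``establishing that $V$ does not jump across transitions'' is needed so that the limit $c$ is shared across modes, but in fact the hypothesis of the lemma already assumes $V$ decreases along trajectories in $\mathcal{O}$ (including across switches), so the monotone limit $c$ exists regardless; condition (b) is a \emph{conclusion} about what happens on $M$, not an assumption you need to verify to get there.
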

Next, in the light of the above lemma, we express the main result of this section.
\begin{theorem}
\label{thm:Theorem1} Assume that $f_{i}(x_{i})$ and $g_{i}(x_{i})$, $i\in\mathcal{N}$, are twice continuously differentiable
on $\mathbb{R}$. Given Assumption 1 and 2, the dynamics \eqref{eq:Dynamic1}
and \eqref{eq:Dynamic2} will converge to $\left(\bar{x}^{*},\bar{\lambda}^{*}\right)$
that is the solution to the optimization {\mz sub-problem} \eqref{eq:Problem3}.
\end{theorem}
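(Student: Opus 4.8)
The plan is to establish convergence by running LaSalle's invariance principle for hybrid systems (Lemma \ref{lem:InvarPrinciple}), reusing the quadratic function $W(\bar x,\bar\lambda)$ from \eqref{eq:LyapFunBounded} as the certificate. First I would note that, by Lemma \ref{lem:Boundedness}, every trajectory of \eqref{eq:Dynamic1}--\eqref{eq:Dynamic2} is bounded, hence contained in a compact sublevel set $\mathcal{O}=\{(\bar x,\bar\lambda):W(\bar x,\bar\lambda)\le c\}$; since $\dot W\le 0$ this set is positively invariant, supplying the compact invariant set required by Lemma \ref{lem:InvarPrinciple}. The continuity hypothesis (b) of that lemma holds trivially, because $W$ depends only on the continuous states $(\bar x,\bar\lambda)$ and is therefore unchanged when the active set $\sigma$ switches; and the assumed twice-differentiability of $f_i,g_i$ makes the vector field $C^1$ on each fixed-$\sigma$ region, so solutions are well defined and condition (a) can be checked on those intervals.

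Next I would sharpen the derivative computation already performed in Lemma \ref{lem:Boundedness}, keeping the two nonpositive contributions separate:
\begin{equation}
\dot W \le \big(L(\bar x^{*},\bar\lambda)-L(\bar x,\bar\lambda^{*})\big)+\tfrac{1}{\alpha}(\bar x-\bar x^{*})^{\top}H .
\end{equation}
By the saddle-point inequalities \eqref{eq:SaddlePointDef} the first bracket is $\le 0$, while the second term equals $-\tfrac{\beta}{\alpha}\bar x^{\top}DD^{\top}\bar x\le 0$ (using $DD^{\top}\mathbf 1_N=0$ and $\bar x^{*}\in\mathrm{span}\{\mathbf 1_N\}$) and vanishes exactly on the consensus subspace. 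Consequently $\dot W=0$ forces simultaneously consensus $x_1=\dots=x_N=:c$ and the vanishing of the saddle gap, $L(\bar x^{*},\bar\lambda)=L(\bar x^{*},\bar\lambda^{*})=L(\bar x,\bar\lambda^{*})$, together with tightness of the convexity, concavity, and projection inequalities used in the boundedness proof.

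I would then characterize the largest invariant set $M\subseteq\{\dot W=0\}$. On $M$ consensus holds, so $h_i=0$ and \eqref{eq:Dynamic1} reduces to $\dot x_i=-\alpha\nabla_{x_i}L_i$; invariance of the consensus subspace forces all $\nabla_{x_i}L_i$ to share a common value $\gamma$. Invoking that $W$ is constant along $M$ while only the averaged primal coordinate can otherwise move the state, together with the tightness of the projection inequality, I would argue that the dual variables are frozen, i.e. $\dot\lambda_i=0$ with $\lambda_i\ge0$, $g_i(c)\le0$ and $\lambda_i g_i(c)=0$, and that $\gamma=0$; hence $M$ consists precisely of equilibria of \eqref{eq:Dynamic1}--\eqref{eq:Dynamic2}. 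At such an equilibrium, summing $\dot x_i=0$ over $i$ and using $\sum_i h_i=0$ gives the stationarity condition \eqref{eq:KKT2}, while the frozen dual relations are exactly \eqref{eq:KKT1}; by Lemma \ref{lem:(KKT-Conditions)} the point is therefore the optimizer $(\bar x^{*},\bar\lambda^{*})$ of \eqref{eq:Problem3}, and LaSalle (Lemma \ref{lem:InvarPrinciple}) yields convergence of the trajectory to this set.

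The step I expect to be the main obstacle is this invariant-set characterization, precisely because $f_i$ and $g_i$ are assumed only convex and not strictly convex: $\dot W=0$ can hold on more than a single point, so the pointwise condition alone does not pin down an equilibrium. The real work is to exploit invariance, namely that the trajectory must remain in $\{\dot W=0\}$ over a full time interval, in order to upgrade the instantaneous equality conditions to the full stationarity $\nabla_{x_i}L_i=0$ and to $\dot\lambda_i=0$. This is also where the hybrid structure, the switching of $\sigma$ and the resulting nonsmooth dual flow, must be treated carefully through Lemma \ref{lem:InvarPrinciple}, and where twice-differentiability enters, permitting differentiation of the constraint relations along the flow.
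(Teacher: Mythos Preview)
Your route is genuinely different from the paper's. You reuse the distance-to-saddle function $W$ of Lemma~\ref{lem:Boundedness} as the LaSalle certificate; the paper instead introduces a \emph{Krasovskii-type} (velocity) Lyapunov function
\[
V(\dot{\bar x},\dot{\bar\lambda};\sigma)=\tfrac{1}{2\alpha}\sum_{i}\dot x_i^{2}+\tfrac12\sum_{i\notin\sigma}\dot\lambda_i^{2},
\]
and this choice is the whole point of the twice-differentiability hypothesis. Differentiating $V$ brings in the Hessians $f_i''$ and $g_i''$ and produces
\[
\dot V=-\sum_i \dot x_i^{2}f_i''(x_i)-\sum_{i\notin\sigma}\lambda_i\dot x_i^{2}g_i''(x_i)-\tfrac{\beta}{2\alpha}\sum_i\sum_{j\in\mathcal N_i}(\dot x_i-\dot x_j)^2,
\]
so that $\dot V=0$ forces $\dot x_i\equiv 0$ essentially for free, and the KKT conditions follow from the equilibrium equations plus the boundedness of $\lambda_i$. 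The paper also has to check that $V$ does not jump upward when $\sigma$ switches, which is nontrivial because $V$ depends explicitly on $\sigma$; your $W$ avoids that issue but trades it for a much harder invariant-set step.

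Indeed the obstacle you flag at the end is exactly what the paper's choice of $V$ is engineered to bypass. With $W$, the condition $\dot W=0$ only gives you consensus of $\bar x$ and vanishing of the saddle gap; it does not by itself yield $\dot x_i=0$ or $\dot\lambda_i=0$, and the ``upgrade via invariance'' you describe is not carried out. There is also a concrete slip: you write $\bar x^{*}\in\mathrm{span}\{\mathbf 1_N\}$ to kill the cross term $(\bar x-\bar x^{*})^{\top}H$, but $(\bar x^{*},\bar\lambda^{*})$ as used in \eqref{eq:LyapFunBounded} is a saddle point of the \emph{separable} Lagrangian \eqref{eq:LagrangianDef}, whose primal component need not be a consensus vector; conversely, if you take $\bar x^{*}=x^{*}\mathbf 1_N$ with $x^{*}$ the optimizer of \eqref{eq:2}, then the saddle inequalities \eqref{eq:SaddlePointDef} you invoke no longer apply to $L$. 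Either way that step needs repair before the $W$-based argument can close, whereas the paper's velocity function sidesteps the issue entirely.
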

\begin{proof}
{\crb To prove the theorem, it suffices to show that dynamics (\ref{eq:Dynamic1})
and (\ref{eq:Dynamic2}) will converge to a saddle point associated
with the Lagrangian function (\ref{eq:LagrangianDef}).
{
To this end, we split the proof into two parts. We first illustrate that the Lyapunov function
\begin{equation}\label{eq:LyapFunc}
V(\dot{\bar{x}},\dot{\bar{\lambda}};\sigma)=\frac{1}{2\alpha}\sum_{i=1}^{N}\dot{x}_{i}^{2}+\frac{1}{2}\sum_{i=1,i\notin\sigma}^{N}\dot{\lambda}_{i}^{2}.
\end{equation}
is always decreasing.  Then, in the second part, we appeal to Lemma \ref{lem:InvarPrinciple}  to establish that the optimality conditions in Lemma \ref{lem:(KKT-Conditions)}  hold.
}
{  To examine the above Lyapunov function, we only need to consider two scenarios, namely, the one in which the index set  $\sigma$  changes and the other one where this set is fixed. One should note that in the former case the Lyapunov function \eqref{eq:LyapFunc} might be discontinuous as $\dot{\lambda}_{i}$ switches when $\sigma$ changes according to \eqref{eq:Dynamic2}. However, in the latter, the Lyapunov function \eqref{eq:LyapFunc} is continuous. In the following, we establish that in both cases  the positive function \eqref{eq:LyapFunc} is always non-increasing.}
We first assume that $\sigma$ is fixed. Taking derivative of $V(\dot{\bar{x}},\dot{\bar{\lambda}};\sigma)$
along the trajectories \eqref{eq:Dynamic1} and \eqref{eq:Dynamic2} with respect to time, we { obtain}
\begin{equation}
\begin{split}
\dot{V}(\dot{\bar{x}},\dot{\bar{\lambda}};\sigma)   = & 
 \sum_{i=1}^{N}\dot{x}_{i}\left(-\frac{\partial^{2}L_{i}}{\partial x_{i}^{2}}\dot{x}_{i}-\frac{\partial^{2}L_{i}}{\partial\lambda_{i}\partial x_{i}}\dot{\lambda}_{i}+\frac{\dot{h}_{i}}{\alpha}\right)  \\
   & + \sum_{i=1,i\notin\sigma}^{N}g_{i}(x_{i})\frac{\partial g_{i}(x_{i})}{\partial x_{i}}\dot{x}_{i}
\end{split}
\end{equation}
 { The above equations can be simplified by expanding some of its terms into two cases, namely, $i \in \sigma$ and $i \notin \sigma$. Note that when $i\in\sigma$, $\lambda_{i}=0$, 	$\dot{\lambda}_{i}=0$. Thus, we can write}
\begin{align}
\dot{V}(\dot{\bar{x}},\dot{\bar{\lambda}};\sigma) & =
-\sum_{i=1}^{N}\dot{x}_{i}\left(\frac{\partial^{2}f_{i}(x_{i})}{\partial x_{i}^{2}}\dot{x}_{i}-\frac{\dot{h}_{i}}{\alpha}\right)\nonumber \\
  &   -\sum_{i=1,i\notin\sigma}^{N}\dot{x}_{i}\left(\lambda_{i}\frac{\partial^{2}g_{i}(x_{i})}{\partial x_{i}^{2}}\dot{x_{i}}+\frac{\partial g_{i}(x_{i})}{\partial x_{i}}g_{i}(x_{i})\right)\nonumber \\
 &   +\sum_{i=1,i\notin\sigma}^{N}g_{i}(x_{i})\frac{\partial g_{i}(x_{i})}{\partial x_{i}}\dot{x}_{i}.\label{eq:TheoremPrf2}
\end{align}
{ Then after a simple algebraic simplification, it is easy to verify that}
\begin{eqnarray}
\dot{V}(\dot{\bar{x}},\dot{\bar{\lambda}};\sigma) & = & -\sum_{i=1}^{N}\dot{x}_{i}^{2}\frac{\partial^{2}f_{i}(x_{i})}{\partial x_{i}^{2}}-\sum_{i=1,i\notin\sigma}^{N}\lambda_{i}\dot{x}_{i}^{2}\frac{\partial^{2}g_{i}(x_{i})}{\partial x_{i}^{2}}\nonumber \\ & & +\frac{1}{\alpha}\sum_{i=1}^{N}\dot{x}_{i}\dot{h}_{i}.\label{eq:TheoremPrf3}
\end{eqnarray}
{ From the definition of $h_{i}$, we attain the following equality.}
\begin{equation}
\sum_{i=1}^{N}\dot{x}_{i}\dot{h}_{i} = -\frac{\beta}{2}\sum_{i=1}^{N}\sum_{j\in \mathcal{N}_{i}}\left(\dot{x}_{i}-\dot{x}_{j}\right)^{2}. \label{eq:TheoremPrf4}
\end{equation}
Now, with substituting \eqref{eq:TheoremPrf4} in \eqref{eq:TheoremPrf3},
we obtain
\begin{eqnarray}
\dot{V}(\dot{\bar{x}},\dot{\bar{\lambda}};\sigma) & = & -\sum_{i=1}^{N}\dot{x}_{i}^{2}\frac{\partial^{2}f_{i}(x_{i})}{\partial x_{i}^{2}}-\sum_{i=1,i\notin\sigma}^{N}\lambda_{i}\dot{x}_{i}^{2}\frac{\partial^{2}g_{i}(x_{i})}{\partial x_{i}^{2}}\nonumber \\
& & -\frac{\beta}{2\alpha}\sum_{i=1}^{N}\sum_{j\in \mathcal{N}_{i}}\left(\dot{x}_{i}-\dot{x}_{j}\right)^{2}\label{eq:TheoremPrf5}
\end{eqnarray}
From Assumption 1 and that $\frac{\beta}{\alpha}>0$, it is
attained that 
\begin{equation}
\dot{V}(\dot{\bar{x}},\dot{\bar{\lambda}};\sigma)\leq0.\label{eq:LyapunovIneq}
\end{equation}
Hence, ${V}(\dot{\bar{x}},\dot{\bar{\lambda}};\sigma)$ is non-increasing when $\sigma$ does not change. \\
In the following, we will show that the same property holds even when  the set $\sigma$ changes. Consider conditions under which the { index } set $\sigma$ varies: (1) { Consider the case  at given time index, say $t_0$, the index set $\sigma$ is enlarged. This happens when   there is a  larger number of constraints with $g_{i}(x_{i}(t_0^+))<0$ compared to { those with $g_{i}(x_{i}(t_0^-))<0$.}  We then obtain $V(t^{+})\leq V(t_0^{-})$ as $\dot{\lambda}_{i}(t_0^{+})=0$. { Here $t_0^-$ and $t_0^+$ stand for the moment just before and after $t_0$, respectively.} (2) { Now suppose that the index set $\sigma$ shrinks. This case occurs when the set loses a constraint $i$ at time
$t_0$ and  $g_{i}(x_{i}(t_0^+))$ becomes positive}. }Since $g_{i}\left(\cdot\right)$ is a continuous function
and $x_{i}$ is continuous as well, it can be said that { this function}
has passed through zero to become positive.  The latter supports that the new term ${\dot{\lambda}_{i}}^{2}$
is added to $V(\dot{\bar{x}},\dot{\bar{\lambda}};\sigma)$ but since {  $g_{i}(x_{i}(t_0^{+}))=g_{i}(x_{i}(t_0^{-}))$, no discontinuity}
happens. Therefore, one can say $V(\dot{\bar{x}},\dot{\bar{\lambda}};\sigma)$ does not change in this case and, therefore, remains non-increasing { according to \eqref{eq:LyapunovIneq}.}\\
Now, we invoke Lemma \ref{lem:InvarPrinciple}
that presents LaSalle’s invariance principle for hybrid systems. 
{From Lemma \ref{lem:Boundedness}, we conclude that whole space $\mathbb{R}^{2N}$ represents an invariant set for the hybrid dynamics \eqref{eq:Dynamic1} and \eqref{eq:Dynamic2}.  On the other hand, in the { first part of } the proof, we showed that the Lyapunov function \eqref{eq:LyapFunc} decreases along the trajectories produced by \eqref{eq:Dynamic1} and \eqref{eq:Dynamic2}.} According to the statement of Lemma \ref{lem:InvarPrinciple} there should exist maximal invariant set, say $M$, that satisfies conditions (a) and (b) { stated in Lemma \ref{lem:InvarPrinciple}}. In the sequel, we will show that
 \eqref{eq:Dynamic1} and \eqref{eq:Dynamic2} will stabilize at the point in which  conditions (a) and (b) are met; moreover, the KKT conditions \eqref{eq:KKT1} and \eqref{eq:KKT2} are { also} fulfilled.

We first attend to { part} (a). From { the equation} \eqref{eq:TheoremPrf5}, we {  attain }
$\dot{x}_{i}=0$, $i\in\mathcal{N}$, i.e. $\bar{x}\equiv\bar{x}^{*}$
since one can derive from \eqref{eq:Dynamic1} that $\bar{x}$ is continuous.
Also, $\sum_{i=1}^{N}\dot{x}_{i}=-\alpha\sum_{i=1}^{N}\nabla_{x_{i}}L_{i}$.
So, $\sum_{i=1}^{N}\nabla_{x_{i}}L_{i}=0$. Thereby, \eqref{eq:KKT2}
is satisfied.

As for $\bar{\lambda}$, assume that $g_{i}(x_{i}^{*})>0$, then,
$\lambda_{i}$ will grow unboundedly that it contradicts its boundedness
shown earlier in Lemma \ref{lem:Boundedness}. Therefore, $g_{i}(x_{i}^{*})\leq0$,
then two possible cases happen: i) $\lambda_{i}$ would decrease until it reaches at zero, producing
a discontinuity once the projection becomes active. This would contradict with part (b) of Lemma
 \ref{lem:InvarPrinciple}. ii) $\lambda_{i}=0$; the corresponding
projection is active for some $i$. Thus, $g_{i}(x_{i}^{*})\leq0$ and
$\lambda_{i}^{*}=0$ always hold, and, \eqref{eq:KKT1} is met.

In the above, we showed that the equilibrium point of the dynamics
\eqref{eq:Dynamic1} and \eqref{eq:Dynamic2} is a saddle point of
the Lagrangian function \eqref{eq:LagrangianDef}, and in the light of
Saddle Point Theorem \cite[Theorem 4.7]{ruszczynski2006nonlinear}, it is the optimal solution to \eqref{eq:Problem3}.}
\end{proof}

One should note that through Proposition \ref{prop:consensustheorem}, we showed consensus
on states, i.e. $x_{i}=x_{j}$, $i,j=1,\ldots,N$. Furthermore,
by Theorem \ref{thm:Theorem1}, we proved that the control inputs
\eqref{eq:ConsProt} drive the agents towards the saddle point of  the Lagrangian associated with \eqref{eq:Problem3}.
{Hence, the optimal consensus problem \eqref{eq:3} associated with the
network of single-integrator agents \eqref{eq:1} is resolved. }
\begin{remark}
There exists a trade-off between size of the control command  and permitted consensus error when selecting parameters $\alpha$
and $\beta$. As $\beta$ increases, according to Proposition \ref{prop:consensustheorem},
the  consensus error becomes smaller while the control input
size  attains a larger  value. On the other hand, with small $\alpha$, the consensus
error  decreases; however, this  decelerates the optimization process.
\end{remark}

\section{Simulation Example}\label{sec:SE}

{As mentioned earlier, results of the current paper also hold when agents are modeled by several integrators i.e. $x_i \in\mathbb{R}^m$. We exploit this fact and consider the following scenario that clearly exhibits the results of this paper through a numerical simulation.}
{Consider four agents that move in a 2-D space and are connected under a ring topology. Assume that each agent is modeled by one single-integrator dynamics per coordinate. Their local objective functions are as $f_{1}(x_{11},x_{12}) =  x_{11}^{2}+x_{12}^{2},f_{2}(x_{21},x_{22}) = (x_{21}-4)^{2}+(x_{22}-2)^{2},f_{3}(x_{31},x_{32})  =  (x_{31}-3)^{2}+4(x_{32}-1)^{2},f_{4}(x_{41},x_{42})  =  (x_{41}-1)^{2}$ Agent 1 has a local constraint as $g_{1}(x_{11},x_{12})=-x_{11}-x_{12}+1\leq0$.
Agent 2 suffers the constraint $g_{2}(x_{21},x_{22})=x_{21}^{2}+x_{22}^{2}-2\leq0$.
Agent 3 has the local constraint $g_{3}(x_{31},x_{32})=x_{31}^{2}+x_{32}^{2}-1\leq0$,
while agent 4 has no constraint. Let $\alpha=0.1$ and $\beta=10$ be the
control law's coefficients as in \eqref{eq:ConsProt}. Under the control law \eqref{eq:ConsProt}, the trajectories
of agents' positions are shown in Fig. \ref{fig:fig1} when the
initial positions of the agents 1, 2, 3, and 4 are set as $x_{1}=\left[\begin{array}{cc}
2 & 3\end{array}\right]^{\top}$, $x_{2}=\left[\begin{array}{cc}
1 & 4\end{array}\right]^{\top}$, $x_{3}=\left[\begin{array}{cc}
3 & 4\end{array}\right]^{\top}$, and $x_{4}=\left[\begin{array}{cc}
5 & 0\end{array}\right]^{\top}$, respectively. We set the initial values for the Lagrangian multipliers
as zero. The optimal solution to the problem is $\left[\begin{array}{cc}
0.85 & 0.53\end{array}\right]^{\top}$.\\
{
{Among  many existing penalty-based algorithm , due to the page limitation, we
 only compare our result with that of the algorithm  proposed by \cite{yi2015distributed} on the above example (see Fig. 2). As it is observed, with the primal-dual dynamics proposed in \cite{yi2015distributed}, the agents spiral
around the optimal point in a perplexed way to reach the optimal point. Such trajectories towards the final point will impose too much energy consumption
and practically are not feasible to achieve.}

%

\section{Conclusion}
\label{sec:Conc}
{We studied constrained optimal consensus problem for an undirected network of single-integrator
agents.} We proposed a fusion algorithm in which: i) a primal-dual gradient method
was used to satisfy KKT conditions for constrained convex optimization
problems, and ii) a consensus protocol was adopted to make all agents
reach the agreed optimal value. Then, through the theory of stability of perturbed
systems, we showed that this algorithm {\mz   delivers} consensus. Moreover,
we proved that the equilibrium point of the network's dynamics {\mz
coincides} with the optimal solution to the optimization problem, adopting
the LaSalle's invariance principle for hybrid systems. Finally,
we illustrated the performance of our proposed algorithm through a
numerical example.

 \begin{figure}
\begin{center}
\resizebox*{9cm}{!}{\includegraphics{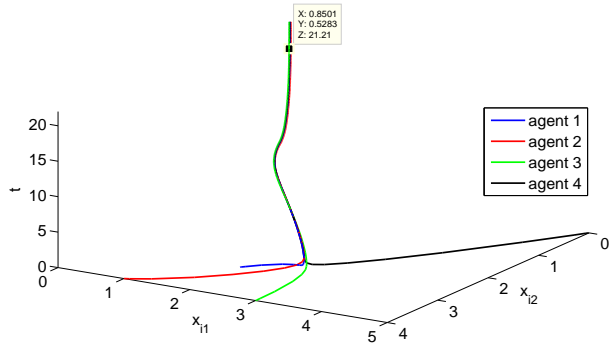}}\hspace{2pt}
\caption{\textcolor{black}{States' trajectories for a ring network of single-integrator agents under the control law \eqref{eq:ConsProt}}}
\label{fig:fig1}
\end{center}
\end{figure}

\begin{figure}
\begin{center}
\resizebox*{9cm}{!}{\includegraphics{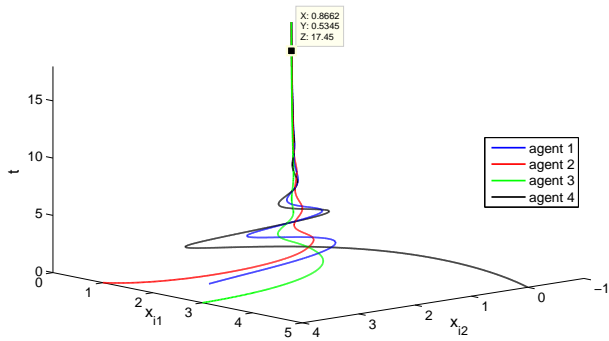}}\hspace{2pt}   %
\caption{\textcolor{black}{ Simulation results of \cite{yi2015distributed}: States' trajectories for a ring network of single-integrator agents}}
\label{fig:fig2}
\end{center}
\end{figure}

%
%
%


\bibliographystyle{spmpsci}      
\bibliography{IEEEfull}   


\end{document}